\newtheorem{theorem}{Theorem}[section]
\newtheorem{proposition}[theorem]{Proposition}
\newtheorem{definition}[theorem]{Definition}
\newtheorem{lemma}[theorem]{Lemma}
\newtheorem{remark}[theorem]{Remark}
\newtheorem{corollary}[theorem]{Corollary}
\newcommand{\RiemIso}{\mathsf{Riem}_{\mathrm{iso}}}
\newcommand{\SingFlows}{\mathsf{SingFlows}} 
\newcommand{\Dec}{\mathsf{Dec}}
\newcommand{\Met}{\mathrm{Met}}
\newcommand{\Diff}{\mathrm{Diff}}
\title{\textbf{RICCI FLOW AS FUNCTOR}}
\author{Alexander Plakhotnikov \footnote{St. Petersburg State University, Department of Mathematics and Mechanics. 7-9 Universitetskaya Embankment, St Petersburg, Russia, 199034.\\
Email adress: \url{st132770@student.spbu.ru}}}
\date{\today}
\begin{document}
\maketitle
\begin{abstract}
    In this note we attempt to propose a categorical framework for the Ricci flow, treating it as a sequence of functors connecting the stack of Riemannian metrics to the category of geometric decompositions via singular flow spacetimes. To rigorize the domain of the flow, we adapt the definition of differentiable stacks to the site of Banach manifolds. We demonstrate that the Ricci flow defines a stratification of this stack.
\end{abstract}
\small{\tableofcontents}

\medskip

\textbf{Keywords:} Ricci flows, Differentiable Stacks, Geometric Decomposition, Singular Flow Spacetimes, Infinite-Dimensional Geometry.
\section{Introduction}
This note addresses the following problem: Can we formalize the Ricci flow as a functor between algebraic categories? Such a formalization allows us to treat the preservation of symmetries and the stability of geometric limits using the language of algebraic geometry and stack theory.

We construct a hierarchical sequence of functors:
\begin{equation} \label{eq:hierarchy}
\begin{tikzcd}
    \RiemIso \arrow[r, "\mathcal{F}"] &
    \SingFlows \arrow[r, "\mathcal{A}"] & \Dec
\end{tikzcd}
\end{equation}
where $\RiemIso$ is the category of Riemannian manifolds, $\SingFlows$ is the category of singular Ricci flow spacetimes, and $\Dec$ is the category of geometric decompositions.

Our main contributions are the following three theorems:

\begin{theorem}[Theorem A: Functorial Construction]
The assignment of a singular Ricci flow spacetime to a compact Riemannian 3-manifold constitutes a functor $\mathcal{F}: \RiemIso \longrightarrow\SingFlows$. Furthermore, the asymptotic limit of the flow defines a functor $\mathcal{A}: \SingFlows \longrightarrow\Dec$. The composition $\mathcal{D} = \mathcal{A} \circ \mathcal{F}$ provides a canonical geometric decomposition for any initial metric.
\end{theorem}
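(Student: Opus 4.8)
The plan is to build the two functors independently and then compose them, with the whole construction resting on the existence and uniqueness theory for singular Ricci flows in dimension three (existence in the sense of Kleiner--Lott, uniqueness in the sense of Bamler--Kleiner). Uniqueness is the load-bearing input: it is precisely what upgrades the \emph{choice} of a flow emanating from a given metric into a \emph{canonical} assignment, and hence into something functorial. I would therefore open by recording the exact form of the uniqueness statement I invoke, namely that the singular Ricci flow spacetime with a prescribed time-zero slice is unique up to a unique isomorphism of spacetimes restricting to the identity on that slice.

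For $\mathcal{F}$, on objects I would send a compact Riemannian $3$-manifold $(M,g_0)$ to the singular flow spacetime $\mathcal{M}(M,g_0)$ it generates; existence--uniqueness makes this well defined up to canonical isomorphism. The morphisms of $\RiemIso$ are isometries, and here the key point is equivariance: given an isometry $\phi\colon (M,g_0)\to (N,h_0)$, the pushforward $\phi_*$ of the flow with initial data $g_0$ is again a singular flow, now with initial data $h_0$, so uniqueness forces $\phi$ to extend to a \emph{unique} spacetime isomorphism $\mathcal{M}(\phi)\colon \mathcal{M}(M,g_0)\to \mathcal{M}(N,h_0)$. The functor axioms then follow formally, since $\mathcal{M}(\mathrm{id})$ and $\mathcal{M}(\psi\circ\phi)$ both restrict correctly on the time-zero slice and hence, by the uniqueness clause, must equal $\mathrm{id}$ and $\mathcal{M}(\psi)\circ\mathcal{M}(\phi)$. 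This is the cleanest part of the argument.

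For $\mathcal{A}$, on objects I would extract from a singular flow spacetime its large-time asymptotic structure, organized via the thick--thin decomposition: after rescaling, the thick part converges to a finite disjoint union of complete finite-volume hyperbolic pieces, while the thin part collapses to a graph manifold, and together these data assemble into an object of $\Dec$. On morphisms I would check that a spacetime isomorphism is asymptotically compatible with these limits and so descends to an isomorphism of decompositions, with the functor axioms following once more from the canonicity of the limits. The composition $\mathcal{D}=\mathcal{A}\circ\mathcal{F}$ is then a functor by general nonsense, and on objects it returns the geometric decomposition of $(M,g_0)$, now realized as a natural construction.

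The main obstacle I anticipate lies in $\mathcal{A}$, not in $\mathcal{F}$. Whereas $\mathcal{F}$ is essentially a formal corollary of uniqueness together with equivariance, the functor $\mathcal{A}$ demands the full long-time analysis of three-dimensional Ricci flow: incompressibility of the hyperbolic cusps, subconvergence of the thick part, and the collapsing theory governing the thin part. The delicate step is not merely that the asymptotic decomposition exists as an isomorphism class, but that it is \emph{natural} --- that a spacetime isomorphism induces a canonical, functorial isomorphism of decompositions uniformly across both the thick and thin strata. I would isolate this as a separate naturality lemma and prove it by transporting the spacetime isomorphism through the rescaled limits that define each stratum, verifying compatibility on overlaps.
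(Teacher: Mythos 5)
Your construction of $\mathcal{F}$ is essentially identical to the paper's: existence of the spacetime via Kleiner--Lott, canonicity and extension of isometries via the Bamler--Kleiner uniqueness theorem, and the functor axioms by the uniqueness clause applied to restrictions on the time-zero slice. For $\mathcal{A}$ you actually go further than the paper, which only \emph{defines} the asymptotic functor and asserts the existence of the limiting isometry on morphisms; your identification of the naturality of the thick--thin decomposition as the genuinely delicate step, to be isolated as a separate lemma, is correct and addresses a point the paper leaves unproved.
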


To handle the diffeomorphism invariance rigorously, we model the space of initial data as an infinite-dimensional differentiable stack $\mathcal{X} = [\Met(M) / \Diff(M)]$.

\begin{theorem}[Theorem B: Stratification of the Moduli Stack]
The functor $\mathcal{D}$ induces a stratification of the stack $\mathcal{X}$. Specifically, if $g_0$ is a metric such that the flow converges to a hyperbolic limit, then there exists an open substack $\mathcal{U} \subset \mathcal{X}$ (in the Fr\'echet topology) containing $[g_0]$ such that every metric in $\mathcal{U}$ flows to the same hyperbolic limit.
\end{theorem}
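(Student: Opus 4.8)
The plan is to realize the hyperbolic limit as a strictly stable fixed point of the volume-normalized Ricci flow on the quotient stack, and then to convert strict linear stability into an open basin of attraction by a slice-theoretic maximal-regularity argument. Throughout I pass to the normalized flow $\partial_t g = -2\,\mathrm{Ric}(g) + \tfrac{2}{n}\bar r\, g$ (with $n=3$), since an unnormalized hyperbolic piece expands linearly in $t$; ``convergence to a hyperbolic limit'' then means that the rescaled flow $\tilde g(t)$ converges in $C^\infty$ to a metric $g_h$ with $\mathrm{Ric}(g_h) = -2 g_h$. By Theorem A the assignment $g_0 \mapsto \tilde g(t)$ is $\Diff(M)$-equivariant, so it descends to a flow on $\StackM = [\Met(M)/\Diff(M)]$, and $[g_h]$ is a fixed point of that induced flow.

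First I would linearize the normalized Ricci--DeTurck flow at $g_h$; after DeTurck gauge-fixing the linearization is the Lichnerowicz Laplacian $\Delta_L$ acting on symmetric $2$-tensors, and linear stability becomes a spectral problem on the closed hyperbolic manifold. The decisive input is rigidity: by Mostow rigidity the hyperbolic metric on a closed $3$-manifold is unique up to isometry, and the associated Koiso-type analysis shows there are no nontrivial transverse-traceless infinitesimal Einstein deformations. Consequently, once the gauge directions are removed, $\Delta_L$ has a strictly positive spectral gap on the $L^2$-orthogonal slice $\mathcal{S}_{g_h}$ transverse to the $\Diff(M)$-orbit; in other words the linearization is sectorial with spectrum in the stable half-plane and no center directions.

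Next I would upgrade strict linear stability to nonlinear dynamical stability. Working in a Sobolev completion $H^s$ for large $s$, I set up the normalized Ricci--DeTurck flow as a quasilinear parabolic equation and invoke Ebin's slice theorem to fix a local slice $\mathcal{S}_{g_h}$ through $g_h$. Because the linearization restricted to the slice is strictly stable, a standard stable-manifold / maximal $L^p$-regularity argument (in the style of Simonett and of Guenther--Isenberg--Knopf) produces a neighborhood $V \subset \mathcal{S}_{g_h}$ such that every initial metric in $V$ yields a flow that exists for all time, stays nonsingular, and converges exponentially in $H^s$ to an isometric copy of $g_h$. Parabolic smoothing then promotes $H^s$-convergence to $C^\infty$-convergence, so $\Diff(M)\cdot V$ consists precisely of metrics flowing to the same hyperbolic limit.

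Finally I would descend this basin to a neighborhood of $[g_0]$ and then to the stack. Since $g_0$ flows to the hyperbolic limit by hypothesis, there is a time $T$ with $\tilde g(T) \in V$; by continuous dependence of the normalized Ricci--DeTurck flow on its initial data, the set $W = \{\, g \in \Met(M) : \tilde g(T) \in \Diff(M)\cdot V \,\}$ is an open, $\Diff(M)$-invariant Fr\'echet neighborhood of $g_0$, and every metric in $W$ inherits long-time existence and convergence to an isometric copy of $g_h$. Its image $\mathcal{U} \subset \StackM$ is therefore an open substack containing $[g_0]$ on which $\mathcal{D}$ is constant, namely the class of the hyperbolic decomposition, which is exactly the asserted stratum. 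The main obstacle is the spectral step: establishing a \emph{strict} gap requires ruling out neutral modes beyond the gauge directions, which rests on the rigidity and non-integrability of hyperbolic deformations in dimension three; a secondary difficulty is ensuring the entire basin stays in the nonsingular regime, so that the abstract $\SingFlows$-valued functor $\mathcal{F}$ restricts to classical smooth Ricci flow there and the convergence theory applies verbatim.
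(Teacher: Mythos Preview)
Your overall architecture matches the paper's: run the flow from $g_0$ to a large time $T$ at which the geometry is close to hyperbolic, invoke continuous dependence so that nearby initial data are also close at time $T$, and then apply local asymptotic stability of the hyperbolic fixed point. For the last step the paper simply cites Ye's stability theorem for negatively curved Einstein metrics under Ricci flow, whereas you reconstruct that result by hand via the Lichnerowicz linearization, Mostow/Koiso rigidity to eliminate center modes, and a maximal-regularity stable-manifold argument in a slice. That is essentially the content of Ye's proof, so this part of your argument is correct but more labor-intensive than necessary; citing Ye (or Guenther--Isenberg--Knopf) would have sufficed.

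The substantive gap is in your continuous-dependence step. You invoke continuous dependence of the \emph{smooth} normalized Ricci--DeTurck flow on $[0,T]$, but the hypothesis of the theorem only says that the \emph{singular} flow of $g_0$ converges to a hyperbolic limit; neck-pinches or other surgeries may occur on $[0,T]$, in which case the classical flow map $g \mapsto \tilde g(T)$ is not defined on any neighborhood of $g_0$ and your set $W$ makes no sense. You flag this as a ``secondary difficulty'' without resolving it. The paper closes exactly this gap by appealing to the Bamler--Kleiner stability theorem for singular Ricci flows (\cite[Theorem~1.5]{BK19}), which gives continuous dependence of the entire spacetime $\mathcal{M}_g$ on the initial metric in the geometric topology, even across singularities; that citation is the one genuinely new ingredient your argument is missing.
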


\begin{theorem}[Theorem C: Conservation of Symmetries]
Let $(M, g)$ be a Riemannian manifold and $\mathrm{Iso}(g)$ its group of isometries. There is an injective group homomorphism
\[ \Psi: \mathrm{Iso}(g) \hookrightarrow \mathrm{Aut}(\mathcal{M}), \]
where $\mathcal{M}$ is the singular Ricci flow spacetime generated by $g$. This implies that symmetries of the initial data are preserved as global automorphisms of the spacetime, even across topological singularities.
\end{theorem}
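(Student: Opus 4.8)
The plan is to realize $\Psi$ as the restriction of the functor $\mathcal{F}$ of Theorem A to the automorphism group of the object $(M,g)$, and then to establish injectivity by restricting spacetime automorphisms to the initial time slice. An isometry $\phi\in\mathrm{Iso}(g)$ is precisely an isomorphism $(M,g)\to(M,g)$ in $\RiemIso$, so functoriality of $\mathcal{F}$ immediately produces an automorphism $\mathcal{F}(\phi)\in\mathrm{Aut}(\mathcal{M})$, and the identities $\mathcal{F}(\phi\circ\psi)=\mathcal{F}(\phi)\circ\mathcal{F}(\psi)$ and $\mathcal{F}(\mathrm{id})=\mathrm{id}$ are exactly the assertion that $\Psi:=\mathcal{F}|_{\mathrm{Iso}(g)}$ is a group homomorphism. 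The substantive work is therefore twofold: making the spacetime automorphism $\mathcal{F}(\phi)$ explicit and confirming that it extends across the singular set, and then proving injectivity.

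For the construction I would combine diffeomorphism invariance with the uniqueness of the singular Ricci flow. Given $\phi$ with $\phi^{*}g=g$, transport the Kleiner--Lott spacetime structure $(\mathcal{M},\mathfrak{t},\partial_{\mathfrak{t}},g)$ by $\phi$ to obtain a pushforward spacetime $\phi_{*}\mathcal{M}$; this is again a singular Ricci flow spacetime because every defining condition---the Ricci-flow equation on regular points, $\kappa$-noncollapsing, and the canonical neighborhood assumption---is diffeomorphism invariant. Its initial time slice is $(M,\phi_{*}g)=(M,g)$. By the Bamler--Kleiner uniqueness theorem, a singular Ricci flow is determined up to canonical isometry by its initial data, so there is a unique isomorphism $\Phi_{\phi}\colon\mathcal{M}\to\phi_{*}\mathcal{M}$ restricting to the identity on the $\mathfrak{t}=0$ slice. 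Composing with the transport map gives $\Psi(\phi)$, which is a \emph{global} automorphism precisely because uniqueness holds at and across the singular times rather than merely on each smooth interval.

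For injectivity I would use that the $\mathfrak{t}=0$ slice of $\mathcal{M}$ is canonically $(M,g)$ and that $\Psi(\phi)$ preserves $\mathfrak{t}$, hence restricts to a self-map of this slice which by construction equals $\phi$. Thus $\Psi(\phi)=\mathrm{id}_{\mathcal{M}}$ forces $\phi=\mathrm{id}_{M}$, so $\ker\Psi=\{\mathrm{id}\}$. I would also verify that $\Psi$ is continuous for the natural topologies---$\mathrm{Iso}(g)$ being a compact Lie group by Myers--Steenrod---which follows from continuous dependence of the uniqueness isomorphism on $\phi$.

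The main obstacle is guaranteeing a single global automorphism rather than a family defined only on the smooth strata between singular times. On each maximal smooth interval, preservation is classical: Hamilton's uniqueness for Ricci flow on compact manifolds gives $\phi^{*}g(t)=g(t)$ directly. The genuine difficulty is gluing these across the singular set, where the topology of the time slices changes; this is exactly what uniqueness of the Kleiner--Lott singular Ricci flow supplies, and I would organize the argument so that this uniqueness does all the heavy lifting at the singular times, leaving the homomorphism and injectivity claims as formal consequences.
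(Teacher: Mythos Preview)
Your proposal is correct and follows essentially the same route as the paper: define $\Psi$ by applying the functor $\mathcal{F}$ (the paper calls it $\mathcal{C}$) to automorphisms of $(M,g)$ in $\RiemIso$, invoke the Bamler--Kleiner uniqueness theorem to guarantee that isometries extend globally across singularities, and prove injectivity by restricting $\Psi(\phi)$ to the $\mathfrak{t}=0$ slice to recover $\phi$. Your write-up is in fact more detailed than the paper's (the explicit pushforward construction and the continuity remark via Myers--Steenrod are extras), but the logical skeleton is identical.
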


\paragraph{Acknowledgements.} I would like to express my deep gratitude to Gregory Taroyan for the criticism of ideas, valuable advices and fruitful discussions.

\section{Preliminaries}

\subsection*{Sobolev Spaces for Metrics}
To treat the space of metrics as a manifold, we must fix the regularity. We follow Hebey's standard construction.

\begin{definition}[\cite{Heb96} Definition 2.1]
Let $(M, g)$ be a Riemannian manifold. The Sobolev space $H^p_k(M)$ is the completion of the space of smooth functions $C^\infty(M)$ with respect to the norm:
\[ \|u\|_{H^p_k} = \sum_{j=0}^k \left(\int_M |\nabla^j u|^p \, dv_g \right)^{1/p}. \]
\end{definition}

We fix $p=2$ and $k > n/2 + 2$ so that metrics are $C^2$-continuous and the Sobolev embedding theorems hold.

\subsection*{Banach Manifolds}

Since the space of metrics is infinite-dimensional, we require the definition of a Banach manifold. 

\begin{definition}[see, e.g. \cite{Ban16} Chapter I]
Let $E$ be a Banach space. A topological space $X$ is called a Banach manifold if there exists an atlas $\{(U_i, \varphi_i)\}_{i \in I}$ such that:
\begin{enumerate}
    \item $\{U_i\}$ is an open cover of $X$.
    \item For each $i$, $\varphi_i: U_i \longrightarrow\varphi_i(U_i) \subset E$ is a homeomorphism onto an open subset of $E$.
    \item For any $i, j$ with $U_i \cap U_j \neq \emptyset$, the transition map $\varphi_j \circ \varphi_i^{-1}: \varphi_i(U_i \cap U_j) \longrightarrow\varphi_j(U_i \cap U_j)$ is a $C^\infty$-morphism of Banach spaces.
\end{enumerate}
\end{definition}

In our context, the space $\Met^k(M)$ of Riemannian metrics of Sobolev class $H^k$ is a Banach manifold modeled on the Banach space of symmetric 2-tensors sections $H^k(S^2 T^*M)$.

\subsection*{Infinite-Dimensional Differentiable Stacks}

To define the stack of metrics, we must use the language of differentiable stacks. However, standard definitions in the literature are restricted to finite dimensions.

C. Blohmann defines a differentiable stack as follows:

\begin{definition}[\cite{Blo08} Definition 2.3]
A stack $\mathcal{X}$ over the site of finite-dimensional manifolds $\mathfrak{Mfld}$ is called differentiable if there is a manifold $S \in \mathfrak{Mfld}$ and a surjective representable submersion $S \longrightarrow\mathcal{X}$, called an atlas.
\end{definition}

We motivate our adaptation as follows: the object of our study is the moduli stack of Riemannian metrics. The space of sections $\Met^k(M)$ is a Banach manifold, not a finite-dimensional manifold, consequently, it does not belong to the category $\mathfrak{Mfld}$, and the standard definition of a differentiable stack is inapplicable. To rigorously treat the Ricci flow on the space of metrics, we must extend the site from finite-dimensional manifolds to the category of Banach manifolds, denoted $\mathbf{_M Ban}$.

We propose the following definition for use in infinite-dimensional case:

\begin{definition}
Let $\mathbf{_M Ban}$ be the category of Banach manifolds equipped with the open cover topology. A category fibered in groupoids $\mathcal{X}$ over $\mathbf{_M Ban}$ is called an infinite-dimensional differentiable stack if:
\begin{enumerate}
    \item It satisfies the descent axioms for isomorphisms and objects with respect to open covers in $\mathbf{_M Ban}$. \,\footnote{analogous to \cite[Definition 2.2]{Blo08} }
    \item It admits an atlas: there exists a Banach manifold $S \in \mathbf{_M Ban}$ and a surjective representable submersion $S \longrightarrow \mathcal{X}$.
\end{enumerate}
\end{definition}
\begin{remark}
Let $M$ be a closed manifold. Let $X = \Met^k(M)$ be the Banach manifold of $H^k$ metrics. Let $G = \Diff^{k+1}(M)$ be the Hilbert-Lie group of $H^{k+1}$ diffeomorphisms. \footnote{see \cite[Theorem 43.1]{KM97} }
The action of $G$ on $X$ defines a translation Lie groupoid $G \times X \rightrightarrows X$. The associated quotient stack
\[ \mathcal{X}_{\mathrm{Ric}} = [\Met^k(M) / \Diff^{k+1}(M)] \]
is an infinite-dimensional differentiable stack according to the definition above, with atlas $X \longrightarrow\mathcal{X}_{\mathrm{Ric}}$.
\end{remark}
\begin{definition}
A singular Ricci flow $\mathcal{M}$ is said to have a hyperbolic limit if, as the time function $\mathfrak{t} \to \infty$, the thick part of the spatial slices converges in the smooth topology to a complete hyperbolic 3-manifold $(H, g_{\mathrm{hyp}})$ of finite volume with constant sectional curvature $-1$.
\end{definition}
\begin{definition}
Let $\mathcal{X}$ be a stack over $\mathbf{_M Ban}$.
\begin{enumerate}
    \item A substack $\mathcal{Y} \subset \mathcal{X}$ is a full subcategory of $\mathcal{X}$ that is closed under isomorphisms and pullbacks. That is, if an object $x$ over $U$ belongs to $\mathcal{Y}$, then any object isomorphic to $x$ belongs to $\mathcal{Y}$, and for any smooth map $f: V \to U$, the pullback $f^*x$ belongs to $\mathcal{Y}$.
    \item A substack $\mathcal{U} \subset \mathcal{X}$ is called open if the inclusion morphism $\iota: \mathcal{U} \hookrightarrow \mathcal{X}$ is representable by open embeddings. Explicitly, for any Banach manifold $T$ and any morphism $T \to \mathcal{X}$, the fiber product $T \times_{\mathcal{X}} \mathcal{U}$ is represented by an open submanifold of $T$. In the context of an atlas $S \to \mathcal{X}$, $\mathcal{U}$ corresponds to a $G$-invariant open subset of $S$.
\end{enumerate}
\end{definition}
To interpret the Ricci flow as a gradient flow on the moduli stack, we recall the variational structure introduced by Perelman.
\begin{definition}[\cite{Per02} Section 1.1 and proof of Proposition 1.2]
Let $(M,g)$ be a closed Riemannian manifold and $f \in C^\infty(M)$ be a scalar function. The $\mathcal{F}$-functional is defined as:
\[ \mathcal{F}(g, f) = \int_M (R_g + |\nabla f|^2) e^{-f} \, dv_g. \]
Consider the measure $dm = e^{-f} dv_g$ as fixed. The gradient flow of $\mathcal{F}$ with respect to the $L^2$-metric on the space of metrics is given by the system:
\begin{equation} \label{eq:perelman_flow}
\begin{cases}
\partial_t g_{ij} &= -2(R_{ij} + \nabla_i \nabla_j f), \\
\partial_t f &= -R - \Delta f + |\nabla f|^2.
\end{cases}
\end{equation}
This system is strictly parabolic modulo diffeomorphisms generated by $\nabla f$.
\end{definition}
\section{Categorical framework}
\subsection*{The Smooth Flow and Stack Trajectories}
The Ricci flow equation $\partial_t g = -2\mathrm{Ric}(g)$ defines a curve in the Banach manifold $\Met(M)$. 
\begin{theorem}[\cite{DeT83} Theorem 1]
For any compact Riemannian manifold $(M, g_0)$, there exists a unique smooth solution $g(t)$ to the Ricci flow defined on a maximal interval $[0, T)$.
\end{theorem}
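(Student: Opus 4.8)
The plan is to establish short-time existence and uniqueness via DeTurck's trick, converting the weakly parabolic Ricci flow into a strictly parabolic system. First I would isolate the analytic obstruction by computing the principal symbol of the linearized operator $g \mapsto -2\,\mathrm{Ric}(g)$. Linearizing at $g$ in a direction $h$, the principal part is governed (up to normalization) by
\[ \sigma_\xi(h)_{ij} = |\xi|^2_g\, h_{ij} - \bigl(\xi_i\,\xi^k h_{kj} + \xi_j\,\xi^k h_{ki}\bigr) + \xi_i \xi_j \,(\mathrm{tr}_g h), \]
and a direct substitution shows that $\sigma_\xi$ annihilates every tensor of the form $h_{ij} = \xi_i V_j + \xi_j V_i$. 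This kernel is the analytic shadow of the diffeomorphism invariance $\mathrm{Ric}(\phi^* g) = \phi^* \mathrm{Ric}(g)$ together with the contracted second Bianchi identity, and it is precisely what prevents a direct appeal to parabolic theory.

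Second, I would introduce the DeTurck vector field relative to the fixed background $\bar g = g_0$,
\[ W^k = g^{ij}\bigl(\Gamma^k_{ij}(g) - \Gamma^k_{ij}(\bar g)\bigr), \]
and study the Ricci-DeTurck flow $\partial_t g = -2\,\mathrm{Ric}(g) + \mathcal{L}_{W} g$. A second symbol computation shows that the added Lie-derivative term cancels the degenerate gauge directions exactly, so the modified operator has principal symbol $|\xi|^2_g\,\mathrm{Id}$ on symmetric $2$-tensors; the system is strictly (Petrovskii) parabolic. For such a quasilinear parabolic system on the compact manifold $M$, linearizing about $g_0$ and invoking the standard $H^k$-parabolic existence theory — or, equivalently, the inverse function theorem in the relevant parabolic Hölder or Sobolev spaces — produces a unique smooth solution $\tilde g(t)$ on some interval $[0,\varepsilon)$.

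Third, I would descend from the gauge-fixed flow back to the Ricci flow. Given $\tilde g(t)$, I solve the ODE for a time-dependent family of diffeomorphisms
\[ \partial_t \phi_t = -\,W\bigl(\tilde g(t)\bigr)\circ \phi_t, \qquad \phi_0 = \mathrm{id}_M, \]
which admits a smooth solution by compactness of $M$, and set $g(t) = \phi_t^* \tilde g(t)$. Using $\tfrac{d}{dt}\phi_t^* \tilde g = \phi_t^*\bigl(\partial_t \tilde g + \mathcal{L}_{\dot\phi_t}\tilde g\bigr)$ one checks that the Lie-derivative contributions cancel and that $g(t)$ solves $\partial_t g = -2\,\mathrm{Ric}(g)$ with $g(0) = g_0$. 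For uniqueness I would run the construction in reverse: given two Ricci flow solutions with identical initial data, couple each to the harmonic map heat flow into $(M,\bar g)$ to obtain solutions of the strictly parabolic Ricci-DeTurck system, whence uniqueness for that system together with uniqueness for the diffeomorphism ODE forces the two original flows to coincide. The maximal interval $[0,T)$ then arises by taking the union of all intervals on which a solution exists, uniqueness ensuring these local solutions patch to a single smooth flow.

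The main obstacle is the degeneracy of the symbol identified in the first step: without breaking the diffeomorphism gauge the flow is only weakly parabolic and no off-the-shelf existence theorem applies. DeTurck's modification is the crux of the argument, and the one genuinely delicate point is verifying that the gauge-fixing term cancels exactly the kernel $\{\xi \odot V\}$ of $\sigma_\xi$ so that strict parabolicity holds; once that cancellation is confirmed, existence, uniqueness, and continuation to a maximal interval all follow from standard quasilinear parabolic theory.
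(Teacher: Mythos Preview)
The paper does not supply its own proof of this theorem; it is stated as a cited preliminary result from \cite{DeT83}, so there is nothing in the paper to compare against. Your sketch is correct and is essentially the argument of the cited reference: DeTurck's trick of adding the Lie-derivative term $\mathcal{L}_W g$ to make the system strictly parabolic, solving the resulting quasilinear parabolic equation on the compact manifold, and then pulling back by the flow of $-W$ to recover a genuine Ricci flow solution. The uniqueness argument you describe --- coupling to the harmonic map heat flow into a fixed background --- is the now-standard approach, though it is due to Hamilton rather than to DeTurck's original paper; either way the conclusion is the same and your outline is sound.
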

\begin{proposition}
The Ricci flow defines a global section of the tangent bundle of the moduli stack $\mathcal{X}_{\mathrm{Ric}} = [\Met(M) / \Diff(M)]$.
\end{proposition}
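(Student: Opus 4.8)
The plan is to realize the Ricci flow as a $\Diff(M)$-equivariant vector field on the atlas $\Met(M)$ and then invoke the descent property characterizing vector fields on a quotient stack. First I would observe that since $\Met^k(M)$ is an open cone in the Banach space $H^k(S^2T^*M)$, its tangent bundle is canonically trivial with fiber the symmetric $2$-tensors, so $T_g\Met(M)\cong H^k(S^2T^*M)$. The assignment $V(g):=-2\,\mathrm{Ric}(g)$ is therefore a candidate section $g\mapsto(g,-2\,\mathrm{Ric}(g))$. The immediate subtlety is the derivative loss: as a second-order quasilinear operator, $\mathrm{Ric}$ maps $\Met^k(M)$ into $H^{k-2}(S^2T^*M)$ rather than back into the fiber $H^k$. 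I would resolve this by passing to the smooth Fr\'echet scale, where $\mathrm{Ric}\colon\Met^\infty(M)\to\Gamma^\infty(S^2T^*M)$ preserves smoothness, or equivalently by reading $V$ as a section valued in the rougher tangent bundle of the relevant rung of the Sobolev scale; in either reading $V$ is a genuine, smooth (as a map of manifolds) section of the tangent bundle of the atlas.

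The heart of the argument is $\Diff(M)$-equivariance. For every $\phi\in\Diff^{k+1}(M)$ and $g\in\Met^k(M)$ one has the naturality identity $\mathrm{Ric}(\phi^* g)=\phi^*\,\mathrm{Ric}(g)$, because $\mathrm{Ric}$ is assembled from $g$ by coordinate-independent contractions of the Levi-Civita connection and curvature, all of which commute with pullback by diffeomorphisms; the identity holds for smooth data and extends to the Sobolev class by density. Equivalently, $V$ is invariant under the translation groupoid $\Diff(M)\times\Met(M)\rightrightarrows\Met(M)$ with source $s(\phi,g)=g$ and target $t(\phi,g)=\phi^* g$.

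I would then translate invariance into descent. Along the atlas $\pi\colon\Met(M)\to\mathcal{X}_{\mathrm{Ric}}$, the tangent bundle $T\mathcal{X}_{\mathrm{Ric}}$ is the differentiable stack presented by the tangent groupoid $T\!\left(\Diff(M)\times\Met(M)\right)\rightrightarrows T\Met(M)$, and a global section of $T\mathcal{X}_{\mathrm{Ric}}\to\mathcal{X}_{\mathrm{Ric}}$ is precisely a multiplicative (groupoid-compatible) vector field on the atlas, which for a translation groupoid is exactly a $\Diff(M)$-invariant vector field on $\Met(M)$. By the previous step $V$ satisfies this cocycle condition, so it descends to a canonical global section $\bar V$ of $T\mathcal{X}_{\mathrm{Ric}}$, well-defined up to the $2$-isomorphisms of the atlas.

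The main obstacle I anticipate is reconciling the degeneracy of $V$ with the tangent-bundle formalism, and I would turn this defect into the crux of the proof. The Ricci flow is only strictly parabolic modulo diffeomorphism, as recorded in the gradient-flow description above and in the DeTurck theorem: the principal symbol of the linearization of $-2\,\mathrm{Ric}$ annihilates exactly the gauge directions. These are the orbit directions appearing in the tangent complex of the stack at $[g]$, namely the image of the infinitesimal action $\mathrm{Vect}(M)\xrightarrow{\,X\mapsto\mathcal{L}_X g\,}T_g\Met(M)$. Thus the very directions along which $V$ fails to be parabolic on the atlas are precisely those that are quotiented out in $\mathcal{X}_{\mathrm{Ric}}$, so $\bar V$ is independent of the choice of slice or gauge. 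In this sense the DeTurck degeneracy and the stacky quotient are two expressions of the same phenomenon, and this observation both discharges the regularity worry and shows that $\bar V$ is the intrinsic object promised by the proposition.
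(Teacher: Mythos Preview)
Your argument follows the same line as the paper's: identify $g\mapsto -2\,\mathrm{Ric}(g)$ as a vector field on the atlas $\Met(M)$, verify $\Diff(M)$-equivariance via the naturality identity $\mathrm{Ric}(\phi^*g)=\phi^*\mathrm{Ric}(g)$, and conclude that it descends to a section of $T\mathcal{X}_{\mathrm{Ric}}$. You are in fact more careful than the paper, which does not address the derivative loss $H^k\to H^{k-2}$ or expand on the DeTurck-degeneracy remark; your handling of both is sound and only reinforces the shared argument.
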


\begin{proof}
Recall that for a quotient stack $\mathcal{X} = [U / G]$, the tangent stack $T\mathcal{X}$ is presented by the transformation groupoid associated with the tangent action of $G$ on $T U$. A vector field on the stack $\mathcal{X}$ is defined as a section of the projection $T\mathcal{X} \to \mathcal{X}$. In terms of the presentation, this corresponds to a smooth vector field $V: U \to TU$ on the atlas $U$ that is $G$-equivariant.

Let $U = \Met(M)$ and $G = \Diff(M)$. The action of $\varphi \in G$ on $g \in U$ is given by the pullback $R_\varphi(g) = \varphi^* g$. The tangent space at a point $g$, denoted $T_g \Met(M)$, is identified with the space of symmetric $(0,2)$-tensor fields $\Gamma(S^2 T^*M)$. The differential of the action, $d(R_\varphi): T_g \Met(M) \to T_{\varphi^* g} \Met(M)$, acts linearly on tensors: for any $h \in T_g \Met(M)$, we have $d(R_\varphi)(h) = \varphi^* h$.

The Ricci flow is defined by the vector field $\mathcal{R}: \Met(M) \to T\Met(M)$ given by $\mathcal{R}(g) = -2\mathrm{Ric}(g)$. To show that $\mathcal{R}$ descends to the stack, we must verify the equivariance condition:
\[ \mathcal{R}(R_\varphi(g)) = d(R_\varphi)(\mathcal{R}(g)) \]
for all $g \in \Met(M)$ and $\varphi \in \Diff(M)$.

Evaluating the left-hand side:
\[ \mathcal{R}(\varphi^* g) = -2\mathrm{Ric}(\varphi^* g). \]
Evaluating the right-hand side:
\[ d(R_\varphi)(-2\mathrm{Ric}(g)) = -2\varphi^* (\mathrm{Ric}(g)). \]
By the naturality of the Ricci curvature tensor (it is a geometric invariant), we have the identity $\mathrm{Ric}(\varphi^* g) = \varphi^* \mathrm{Ric}(g)$. Thus, the left and right sides are equal. 

This implies that the assignment $g \mapsto -2\mathrm{Ric}(g)$ is compatible with the groupoid structure and defines a well-posed global section of $T\mathcal{X}_{\mathrm{Ric}}$. Unlike the classical quotient space $\Met(M)/\Diff(M)$, where the vector field is ill-defined at metrics with isometries (due to the non-free action), the stack formalism retains the isotropy information, making the equivariant vector field the correct rigorous object.
\end{proof}
\subsection*{Singularities and Spacetimes}
The smooth flow curve $g(t)$ may only exist for a finite time $T < \infty$. To continue the flow, we adopt the spacetime formalism.
\paragraph{The Category $\SingFlows$}
\begin{definition}[\cite{BK19} Definition 1.2]
A singular Ricci flow is a tuple $(\mathcal{M}, \mathfrak{t}, \partial_{\mathfrak{t}}, g)$ where $\mathcal{M}$ is a smooth 4-manifold (possibly with boundary), $\mathfrak{t}$ is a time function, and $g$ is a metric on spatial slices satisfying the Ricci flow equation and the Canonical Neighborhood Assumption.\,\footnote{as defined in \cite[Definition 5.7]{BK19} }
\end{definition}
\begin{definition}[Category of Singular Flows]
The category $\SingFlows$ consists of:
\begin{itemize}
    \item \textbf{Objects:} Singular Ricci flow spacetimes satisfying the completeness and canonical neighborhood conditions.
    \item \textbf{Morphisms:} Smooth embeddings $\Phi: \mathcal{M} \longrightarrow\mathcal{M}'$ that preserve the time function ($\mathfrak{t}' \circ \Phi = \mathfrak{t}$), the time vector field, and the metric tensor ($g = \Phi^* g'$).
\end{itemize}
\end{definition}
\begin{remark}
In $\SingFlows$, the object $\mathcal{M}$ is 4-dimensional. The spatial topology $M_t = \mathfrak{t}^{-1}(t)$ changes at singular times. This internalizes the surgery process into the geometric definition of the object.
\end{remark}
\begin{proposition}
There exists a functor $\mathcal{C}: \RiemIso \longrightarrow\SingFlows$ for closed 3-manifolds.
\end{proposition}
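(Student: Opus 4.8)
The plan is to specify $\mathcal{C}$ on objects and on morphisms and then verify the two functor axioms, with the existence-and-uniqueness theory for singular Ricci flows (\cite{BK19}) doing the essential work. On objects, I would assign to each closed Riemannian $3$-manifold $(M,g)$ the singular Ricci flow spacetime $\mathcal{M}$ emanating from $(M,g)$. Existence of such a spacetime is supplied by the Kleiner--Lott construction, and, crucially, its uniqueness up to isomorphism by \cite{BK19}. This canonicity is exactly what makes $(M,g)\mapsto \mathcal{M}$ well defined, and I would set $\mathcal{C}(M,g):=\mathcal{M}$.

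On morphisms, a morphism in $\RiemIso$ is an isometry $\phi\colon (M,g)\to(M',g')$, i.e. a diffeomorphism with $\phi^* g'=g$. Let $\mathcal{M}'=\mathcal{C}(M',g')$. The key observation is the naturality of the Ricci flow under diffeomorphisms, already used in the preceding proposition: pulling back the spacetime $\mathcal{M}'$ along $\phi$ yields a singular Ricci flow whose initial slice is $(M,\phi^* g')=(M,g)$, so by the uniqueness theorem it is \emph{canonically} isomorphic to $\mathcal{M}$. Composing this canonical isomorphism with the pullback projection gives a map $\Phi:=\mathcal{C}(\phi)\colon \mathcal{M}\to\mathcal{M}'$ covering $\phi$ on the initial slice. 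By construction $\Phi$ intertwines the time functions, the time vector fields, and the spatial metrics, so it is genuinely a morphism of $\SingFlows$; invertibility of $\phi$ guarantees $\Phi$ is an isomorphism rather than a mere embedding.

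For functoriality, identity preservation $\mathcal{C}(\mathrm{id}_{(M,g)})=\mathrm{id}_{\mathcal{M}}$ holds because the identity isometry induces, via uniqueness, the identity self-isomorphism of $\mathcal{M}$. For composition, given $\phi\colon(M,g)\to(M',g')$ and $\psi\colon(M',g')\to(M'',g'')$, both $\mathcal{C}(\psi)\circ\mathcal{C}(\phi)$ and $\mathcal{C}(\psi\circ\phi)$ are isomorphisms $\mathcal{M}\to\mathcal{M}''$ covering the same initial isometry $\psi\circ\phi$; the uniqueness clause of \cite{BK19} then forces them to coincide. This reduces functoriality entirely to the rigidity provided by uniqueness.

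The hard part is precisely this dependence on the uniqueness of singular Ricci flows: without the Bamler--Kleiner theorem the object assignment would be defined only up to non-canonical isomorphism and the morphism assignment would require arbitrary choices, so functoriality would fail. A secondary subtlety, which dictates the shape of $\RiemIso$, is that the source morphisms must be invertible: a non-surjective isometric embedding need not extend to a spacetime embedding, since the flow is governed by a global parabolic PDE that does not localize to submanifolds. I would therefore take the morphisms of $\RiemIso$ to be isometries and note explicitly that this restriction is forced by the analytic nature of the flow rather than being a matter of convenience.
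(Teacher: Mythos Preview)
Your proposal is correct and follows essentially the same route as the paper: define $\mathcal{C}$ on objects via the Kleiner--Lott existence theorem, on morphisms via the Bamler--Kleiner uniqueness theorem (which furnishes the canonical extension of an initial-slice isometry to a spacetime isometry), and deduce both functor axioms from that same uniqueness clause. Your framing of the morphism construction as ``pull back $\mathcal{M}'$ along $\phi$ and then invoke uniqueness'' is a minor rephrasing of the paper's direct appeal to the extension-of-isometries statement in \cite[Theorem~1.3]{BK19}, and your closing remark on why $\RiemIso$ must consist of invertible isometries is a welcome addition not made explicit in the paper.
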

\begin{proof}
We construct the functor $\mathcal{C}$ as follows:
\begin{enumerate}
\item On Objects:

For a Riemannian manifold $(M, g) \in \RiemIso$, we define $\mathcal{C}(M, g)$ to be the unique maximal singular Ricci flow spacetime $\mathcal{M} = (\mathcal{M}, \mathfrak{t}, \partial_{\mathfrak{t}}, h)$ satisfying the Canonical Neighborhood Assumption, such that the time-$0$ slice $(\mathcal{M}_0, h(0))$ is isometric to $(M, g)$. The existence of such a spacetime is guaranteed by \cite[Theorem 1.1]{KL17}: for every compact Riemannian 3-manifold $M$, there is a singular Ricci flow with initial condition $M$.

\item On Morphisms:

Let $\varphi: (M, g_M) \to (N, g_N)$ be an isometry in $\RiemIso$. Let $\mathcal{M} = \mathcal{C}(M, g_M)$ and $\mathcal{N} = \mathcal{C}(N, g_N)$ be the associated spacetimes. The map $\varphi$ induces an isometry $\varphi_0: \mathcal{M}_0 \to \mathcal{N}_0$ between the initial slices.
According to the Uniqueness Theorem for Singular Ricci Flows \cite[Theorem 1.3]{BK19}, any isometry between time-slices of two singular Ricci flows extends uniquely to a time-preserving isometry of their maximal future developments.
We define $\mathcal{C}(\varphi)$ to be this unique spacetime isometry $\Phi: \mathcal{M} \to \mathcal{N}$ such that $\Phi|_{\mathfrak{t}=0} = \varphi_0$.

\item Functoriality:
\begin{itemize}
    \item[(i)] Identity: If $\varphi = \mathrm{id}_M$, then the induced map on time-0 slices is the identity. The unique extension of the identity on the initial slice to the spacetime is the identity map $\mathrm{id}_{\mathcal{M}}$. Thus $\mathcal{C}(\mathrm{id}_M) = \mathrm{id}_{\mathcal{C}(M)}$.
    \item[(ii)] Composition: Let $\varphi: M \to N$ and $\psi: N \to P$ be isometries. Let $\Phi = \mathcal{C}(\varphi)$ and $\Psi = \mathcal{C}(\psi)$. The composition $\Psi \circ \Phi$ is an isometry from $\mathcal{C}(M)$ to $\mathcal{C}(P)$. Restricted to $t=0$, $(\Psi \circ \Phi)|_0 = \psi_0 \circ \varphi_0$. By the uniqueness of the extension, $\mathcal{C}(\psi \circ \varphi)$ must coincide with $\Psi \circ \Phi$. Thus $\mathcal{C}(\psi \circ \varphi) = \mathcal{C}(\psi) \circ \mathcal{C}(\varphi)$.
\end{itemize}
\end{enumerate}
\end{proof}
\subsection*{Asymptotic Decomposition}
\begin{definition}[Category $\Dec$]
Objects are disjoint unions of 3-manifolds endowed with complete locally homogeneous metrics (specifically hyperbolic and graph manifold components). Morphisms are local isometries.
\end{definition}
\begin{definition}[Asymptotic Functor $\mathcal{A}$]
We define the functor $\mathcal{A}: \SingFlows \to \Dec$ as follows:
\begin{itemize}
    \item[(i)] \textbf{On Objects:} Let $\mathcal{M} = (\mathcal{M}, \mathfrak{t}, \partial_{\mathfrak{t}}, g)$ be a singular Ricci flow spacetime. We define $\mathcal{A}(\mathcal{M})$ to be the geometric decomposition $(M_\infty, g_\infty) \in \Dec$ consisting of the disjoint union of these complete hyperbolic manifolds and the Seifert-fibered spaces corresponding to the graph manifold components.\footnote{For more details, see, e.g., \cite[Section 1.3.]{BK19}.}
    
    \item[(ii)] \textbf{On Morphisms:} Let $\Phi: \mathcal{M} \to \mathcal{N}$ be a morphism in $\SingFlows$ (a time-preserving isometric embedding). For any large $t$, $\Phi$ induces an isometry between the time-slices $\Phi_t: \mathcal{M}_t \to \mathcal{N}_t$. We define $\mathcal{A}(\Phi)$ as the unique limiting isometry $\Phi_\infty: (M_{\mathcal{M},\infty}, g_{\mathcal{M},\infty}) \to (M_{\mathcal{N},\infty}, g_{\mathcal{N},\infty})$ induced by the sequence $\{\Phi_t\}_{t \to \infty}$ on the geometric components.
\end{itemize}
\end{definition}
\subsection*{Applications: Stratification and Symmetry}
\paragraph{Stratification of the Infinite-Dimensional Stack}
\begin{theorem}
Let $H$ be a hyperbolic 3-manifold. Let $\mathcal{X}_H \subset \mathcal{X}_{\mathrm{Ric}}$ be the substack of metrics that flow to a limit isometric to $H$. Then $\mathcal{X}_H$ is open with respect to the Fr\'echet topology on the stack.
\end{theorem}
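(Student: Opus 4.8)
The plan is to descend the statement to the atlas and reduce the openness of $\mathcal{X}_H$ to the openness of a $\Diff^{k+1}(M)$-invariant subset of $\Met^k(M)$. By the definition of an open substack, it suffices to exhibit a $G$-invariant open set $\Omega_H \subset \Met^k(M)$ (with $G = \Diff^{k+1}(M)$) whose associated quotient stack is $\mathcal{X}_H$; concretely, $\Omega_H$ is the set of metrics $g$ for which $\mathcal{A}(\mathcal{C}(M,g))$ is isometric to $H$. The $G$-invariance of $\Omega_H$ is immediate from the equivariance of the flow established in the earlier proposition: since $\mathrm{Ric}(\varphi^* g) = \varphi^* \mathrm{Ric}(g)$, the singular flow of $\varphi^* g$ is the pullback of that of $g$, so its asymptotic limit is $\varphi^*$ of the limit of $g$, which is isometric to $H$. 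Thus the entire content of the theorem is the openness of $\Omega_H$; since the $H^k$-topology of the atlas is coarser than the $C^\infty$ Fr\'echet topology on smooth metrics, openness in $\Met^k(M)$ already implies the asserted Fr\'echet-openness.

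To prove openness I would fix $g_0 \in \Omega_H$ with spacetime $\mathcal{M}_0 = \mathcal{C}(M, g_0)$, and first choose a large time $t_0$ at which the thick part of the slice $(\mathcal{M}_0)_{t_0}$, after the standard parabolic rescaling, is $\delta$-close in the smooth topology to $(H, g_{\mathrm{hyp}})$; such a $t_0$ exists by the hyperbolic-limit hypothesis. The key analytic input is the \emph{continuous dependence of singular Ricci flows on their initial data}: by the uniqueness and stability theory for singular flows \cite[Theorem 1.3]{BK19} together with the associated $\mathbb{F}$-convergence and continuity estimates, there should be a neighborhood $V$ of $g_0$ in $\Met^k(M)$ such that for every $g \in V$ the flow $\mathcal{C}(M,g)$ stays uniformly close to $\mathcal{C}(M,g_0)$ on the compact interval $[0,t_0]$, in the sense of Cheeger--Gromov convergence of the time-slices. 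Consequently the thick part of $\mathcal{C}(M,g)$ at time $t_0$ is, after rescaling, within $2\delta$ of $(H, g_{\mathrm{hyp}})$.

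It then remains to propagate closeness at the finite time $t_0$ to identity of the asymptotic limit, and for this I would invoke the \emph{dynamical stability} of finite-volume hyperbolic $3$-manifolds under the normalized Ricci flow: a flow whose thick part enters a sufficiently small smooth neighborhood of $(H, g_{\mathrm{hyp}})$ is trapped there and reconverges to it, while by Mostow--Prasad rigidity the resulting hyperbolic limit is isometric to $H$ and to no competing manifold. Hence $g \in \Omega_H$ for all $g \in V$, and replacing $V$ by its saturation $G \cdot V$ yields a $G$-invariant open neighborhood, so $\Omega_H$ is open. I expect the main obstacle to be precisely the continuous-dependence step: a singular Ricci flow is genuinely discontinuous in its neck-pinch and surgery structure, so one must show that nearby initial data produce spacetimes close in a topology strong enough to control the thick part \emph{across} singular times, which is the delicate content of the Bamler--Kleiner stability estimates; a secondary difficulty is the dynamical stability of a \emph{noncompact} finite-volume hyperbolic limit, where the cusp region and the adjacent collapsing graph-manifold pieces must be shown not to destabilize the thick part under perturbation. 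One can also note that, since the hyperbolic pieces of the geometric decomposition are a topological invariant of the fixed manifold $M$, the set $\Omega_H$ is in fact closed as well, so the openness asserted here is consistent with $\mathcal{X}_H$ being a union of connected components of $\mathcal{X}_{\mathrm{Ric}}$.
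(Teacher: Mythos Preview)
Your proposal is correct and follows essentially the same strategy as the paper: reduce to the atlas, invoke the Bamler--Kleiner continuous-dependence theorem to transport closeness of initial data to closeness of the time-$t_0$ slices, and then use the dynamical stability of hyperbolic metrics (the paper cites Ye \cite{Ye93}) to conclude convergence to the same limit. Your version is more explicit about the descent to the atlas and the $G$-invariance, and your added remarks on Mostow--Prasad rigidity and the noncompact-cusp difficulty are more careful than the paper's somewhat terse sketch, but the architecture of the argument is identical.
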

\begin{proof}
Let $[g_0] \in \mathcal{X}_H$. The singular flow $\mathcal{M}_0$ associated to $g_0$ becomes non-singular and hyperbolic for large time $T$. By the  continuous dependence on initial data (see, e.g., \cite[Theorem 1.5]{BK19}), for any $g$ sufficiently close to $g_0$ in the Fr\'echet topology, the associated spacetime $\mathcal{M}_g$ is close to $\mathcal{M}_0$ in the geometric topology. This implies that at large time $T$, the slice of $\mathcal{M}_g$ is diffeomorphic to $H$ and close to the hyperbolic metric. By Ye's stability result (\cite[Theorem 2]{Ye93}), negative curvature metrics are linearly stable. Thus, once the flow enters a small neighborhood of the hyperbolic metric, it converges to it.

Therefore, an open neighborhood of $[g_0]$ is contained in $\mathcal{X}_H$.
\end{proof}
\paragraph{Preservation of Symmetries}
\begin{proof}[Proof of Theorem C]
Let $\Gamma = \mathrm{Iso}(M, g_0)$. This group acts by automorphisms on the object $(M, g_0)$ in $\RiemIso$. Since $\mathcal{C}$ is a functor (Theorem 5.3), it induces a group homomorphism $\Psi: \Gamma \longrightarrow\mathrm{Aut}_{\SingFlows}(\mathcal{M})$, where $\mathcal{M} = \mathcal{C}(M, g_0)$.
An element $\Phi \in \mathrm{Aut}(\mathcal{M})$ is a spacetime isometry. Its restriction to the time-0 slice must agree with the initial isometry because, by \cite[Theorem 1.3]{BK19}, the extension of isometries from $t=0$ to the maximal flow is unique.
If $\Psi(\gamma) = \mathrm{id}_{\mathcal{M}}$, then $\Psi(\gamma)|_{t=0} = \mathrm{id}_{M}$. But $\Psi(\gamma)|_{t=0} = \gamma$. Therefore $\gamma = \mathrm{id}$.
This proves $\Psi$ is injective.
\end{proof}
\subsection*{Corollary: Preservation of $S^1$-Symmetry}
The functoriality of the flow and the conservation of isometries yield following application.

\begin{corollary}
Let $(M, g_0)$ be a closed Riemannian 3-manifold admitting a non-trivial smooth effective $S^1$-action. Let $\mathcal{D}(M, g_0) \in \Dec$ be the asymptotic geometric decomposition. Then every connected component of $\mathcal{D}(M, g_0)$ admits an effective $S^1$-action.
\end{corollary}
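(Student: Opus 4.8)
The plan is to transport the circle symmetry along the functorial chain $\mathcal{D} = \mathcal{A}\circ\mathcal{C}$ and then analyze the induced action component by component. First I would replace $g_0$ by an $S^1$-invariant metric. Given the smooth effective action $\alpha\colon S^1 \to \mathrm{Diff}(M)$, averaging $g_0$ over $S^1$ against the Haar measure produces a metric $\bar g_0$ for which $\alpha$ acts by isometries, so that $S^1 \hookrightarrow \mathrm{Iso}(M, \bar g_0)$. Since by geometrization the geometric decomposition of a closed $3$-manifold is determined up to isometry by the underlying topology, the objects $\mathcal{D}(M, g_0)$ and $\mathcal{D}(M, \bar g_0)$ are isomorphic in $\Dec$; hence it suffices to prove the statement for $\bar g_0$, and I may assume the $S^1$-action is isometric from the outset.

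Next I would apply Theorem C to the subgroup $S^1 \le \mathrm{Iso}(M, \bar g_0)$ to obtain an injective homomorphism $\Psi\colon S^1 \hookrightarrow \mathrm{Aut}_{\SingFlows}(\mathcal{M})$, where $\mathcal{M} = \mathcal{C}(M, \bar g_0)$. Post-composing with the asymptotic functor yields a homomorphism $\mathcal{A}\circ\Psi\colon S^1 \to \mathrm{Aut}_{\Dec}\big(\mathcal{D}(M,\bar g_0)\big)$, i.e. an action of $S^1$ on the decomposition by local isometries; concretely each $\Psi(\gamma)$ restricts to an isometry of every late-time slice $\mathcal{M}_t$, and $\mathcal{A}(\Psi(\gamma))$ is the limiting isometry $\Phi_\infty$. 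Because $S^1$ is connected while the set of connected components of $\mathcal{D}(M,\bar g_0)$ is finite, the action cannot permute components, so each component $C$ is invariant and we obtain $\rho_C\colon S^1 \to \mathrm{Iso}(C)$. The kernel of $\rho_C$ is a closed subgroup of $S^1$, hence either finite --- in which case $S^1/\ker\rho_C \cong S^1$ acts effectively on $C$, as required --- or all of $S^1$, i.e. the limiting action on $C$ is trivial. The entire content of the corollary is thus to exclude the latter case.

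This is the main obstacle, and it has two faces. On a hyperbolic component, Mostow--Prasad rigidity forces $\mathrm{Iso}(C)$ to be finite, so $\rho_C$ is automatically trivial there; a nontrivial limiting circle symmetry simply cannot be supported on a hyperbolic piece. The resolution is to show that under the standing hypothesis the decomposition has no hyperbolic pieces at all: a closed $3$-manifold carrying an effective $S^1$-action is, by the Orlik--Raymond classification, a graph manifold whose prime pieces are Seifert-fibered, and Seifert pieces are exactly the graph-manifold (collapsed) components produced by $\mathcal{A}$, never the hyperbolic ones. On each such Seifert component the fiber circle itself furnishes an effective $S^1$-action, and I would identify $\rho_C$, up to a finite covering of the circle, with this fiber action.

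The delicate step --- where I expect the real work to lie --- is verifying that $\mathcal{A}$ does not annihilate the symmetry, that is, that the circle orbits do not collapse to points in the $\mathfrak{t}\to\infty$ limit. I would track the lengths of the $S^1$-orbits along the equivariant flow on the graph part, using that the collapse occurs along the Seifert fibers with bounded geometry in the transverse directions, and conclude that the limiting orbits are precisely the fibers of the locally homogeneous metric on $C$, so that $\rho_C$ is nontrivial. Combining this with the finite-kernel dichotomy of the preceding paragraph yields an effective $S^1$-action on every component, completing the proof.
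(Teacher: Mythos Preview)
Your proposal opens along the same arc as the paper---invoke Theorem~C to obtain an injective $\Psi\colon S^1 \hookrightarrow \mathrm{Aut}(\mathcal{M})$ and then push forward along $\mathcal{A}$---but thereafter diverges substantially. The paper performs no averaging step; it simply takes $S^1 \subseteq \mathrm{Iso}(g_0)$ as the standing hypothesis. It then inserts a short self-contained lemma (preservation of isometries along the smooth Ricci flow, proved by observing that $\varphi^* g(t)$ is another solution with the same initial data and invoking DeTurck--Hamilton uniqueness), extends this through singular times via the uniqueness theorem of \cite{BK19}, and closes with the single sentence: ``Since $\Psi$ is injective, the action on the limit is effective.'' There is no appeal to Orlik--Raymond, no Mostow rigidity, no component-by-component kernel dichotomy, and no discussion of orbit collapse.

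In other words, the step you single out as ``the entire content of the corollary''---ruling out a trivial limiting action on some component---is treated by the paper as an immediate consequence of the injectivity of $\Psi$ on the spacetime. Your route is more careful on exactly this point: you note that a finite-volume hyperbolic component cannot carry a nontrivial circle action and resolve the tension by using the Orlik--Raymond classification to exclude hyperbolic pieces altogether, then argue separately on the Seifert side. What your approach buys is an honest account of why effectiveness on $\mathcal{M}$ survives the passage through $\mathcal{A}$ to each component; what the paper's approach buys is brevity, leaving that passage (and the non-isometric case you handle by averaging) implicit.
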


\begin{proof}
Let $G = S^1 \subseteq \mathrm{Iso}(g_0)$. By Theorem C, there is an injective homomorphism $\Psi: G \to \mathrm{Aut}(\mathcal{M})$.

\begin{lemma}[Preservation of Isometries] \label{lemma:iso_preservation}
Let $(M, g(t))$ be the unique smooth solution to the Ricci flow on $M \times [0, T)$ with initial data $g(0) = g_0$. Let $\varphi: M \to M$ be an isometry of the initial metric, i.e., $\varphi \in \mathrm{Iso}(g_0)$. Then $\varphi$ is an isometry of $g(t)$ for all $t \in [0, T)$.
\end{lemma}
\begin{proof}
Consider the pulled-back family of metrics $\tilde{g}(t) = \varphi^* g(t)$. We verify that $\tilde{g}(t)$ satisfies the Ricci flow equation. Differentiating with respect to time, and using the fact that the pullback commutes with the time derivative:
\[
\frac{\partial}{\partial t} \tilde{g}(t) = \frac{\partial}{\partial t} (\varphi^* g(t)) = \varphi^* \left( \frac{\partial}{\partial t} g(t) \right).
\]
Substituting the Ricci flow equation $\partial_t g = -2\mathrm{Ric}(g)$:
\[
\frac{\partial}{\partial t} \tilde{g}(t) = \varphi^* \left( -2\mathrm{Ric}(g(t)) \right).
\]
Due to the naturality of the Ricci tensor (geometric invariance under diffeomorphisms), we have $\varphi^* \mathrm{Ric}(g) = \mathrm{Ric}(\varphi^* g)$. Therefore:
\[
\frac{\partial}{\partial t} \tilde{g}(t) = -2 \mathrm{Ric}(\tilde{g}(t)).
\]
Thus, $\tilde{g}(t)$ is a solution to the Ricci flow. Now check the initial condition:
\[
\tilde{g}(0) = \varphi^* g(0) = \varphi^* g_0.
\]
Since $\varphi$ is an isometry of $g_0$, we have $\varphi^* g_0 = g_0$. Thus, $\tilde{g}(t)$ and $g(t)$ are both solutions to the Ricci flow on $[0, T)$ with the same initial data $g_0$.

By the Uniqueness Theorem for the Ricci flow (see \cite{DeT83} or \cite{Ham82}), the solution is unique. Therefore, $\tilde{g}(t) = g(t)$ for all $t \in [0, T)$, which implies:
\[
\varphi^* g(t) = g(t).
\]
This signifies that $\varphi$ remains an isometry for the duration of the flow.
\end{proof}
By this lemma,  the symmetry group at $t=0$ persists for all smooth times $t\in [0;T)$. Furthermore, the uniqueness of the singular flow (cf. \cite[Theorem 1.3]{BK19}) ensures that the symmetry group extends to the post-surgery manifold.
Consequently, the limiting metric $g_\infty$ on the decomposition $\mathcal{A}(\mathcal{M})$ must be invariant under the limiting action of $G$. Since $\Psi$ is injective, the action on the limit is effective.
\end{proof}

\section{Examples of Functorial Evolution}

To illustrate the utility of the functor $\mathcal{F}$, we analyze two canonical examples: the formation of a singularity (neck-pinch) and the evolution of a soliton.

\subsection*{Example 1: The Neck-Pinch on $S^3$}
Consider the sphere $S^3$ equipped with a rotationally symmetric metric $g_0$ resembling a dumbbell. As shown by Angenent and Knopf, such metrics develop a "neck-pinch" singularity at finite time $T$.

\begin{proposition}
Let $g_{\mathrm{dumb}}$ be a rotationally symmetric metric on $S^3$ satisfying the conditions of \cite{AK04}. The functor $\mathcal{F}$ assigns to $(S^3, g_{\mathrm{dumb}})$ a spacetime $\mathcal{M}$ which is topologically a cobordism between $S^3$ and the disjoint union of two limit components.
\end{proposition}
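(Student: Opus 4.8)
The plan is to combine the existence of the flow spacetime furnished by the functor $\mathcal{F}$ (constructed in the proposition above as $\mathcal{C}$) with the precise singularity analysis of Angenent and Knopf, and then to read off the topological transition from the structure theory of singular Ricci flows. First I would set $\mathcal{M} = \mathcal{F}(S^3, g_{\mathrm{dumb}})$, so that by \cite[Theorem 1.1]{KL17} the time-$0$ slice $(\mathcal{M}_0, g(0))$ is isometric to $(S^3, g_{\mathrm{dumb}})$ and the spatial slices satisfy $\mathcal{M}_t \cong S^3$ for all $t \in [0, T)$, where $T$ is the first singular time. Because $g_{\mathrm{dumb}}$ is rotationally symmetric, Lemma~\ref{lemma:iso_preservation} guarantees that the symmetry persists along the smooth flow, which is exactly the hypothesis under which the Angenent--Knopf asymptotics apply.

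Next I would invoke \cite{AK04} to establish that the smooth flow develops a non-degenerate Type-I neckpinch at the finite time $T$: the curvature blows up precisely on a single symmetric $2$-sphere (the neck), and after parabolic rescaling the geometry converges to the shrinking round cylinder $\mathbb{R} \times S^2$. The key consequence is that the singularity is modeled on a neck, so it falls under the Canonical Neighborhood Assumption built into the definition of $\SingFlows$, and the singular flow continues past $T$ in the Kleiner--Lott / Bamler--Kleiner sense rather than terminating.

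Then I would describe the topological change across $T$. In the spacetime picture the neckpinch is a surgery along the neck $2$-sphere: the collar $S^2 \times I$ joining the two bulbs is excised and capped by two $3$-balls, so the spatial topology passes from $S^3$ to $S^3 \sqcup S^3$. Each resulting component carries positive scalar curvature, hence by Hamilton's theorem \cite{Ham82} each flows to a round metric and becomes extinct in finite time; these are the two limit components recorded by $\mathcal{A}$, each with spherical geometry. Restricting the time function to an interval $[0, T']$ with $T < T' < \min(T_1, T_2)$, where $T_1, T_2$ are the two extinction times, the $4$-manifold $\mathcal{M}|_{[0,T']}$ is then a smooth cobordism whose lower boundary is $S^3$ and whose upper boundary is $S^3 \sqcup S^3$; equivalently, it is obtained from $S^3 \times [0,T]$ by a single handle attachment along the neck sphere.

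The main obstacle is the rigorous matching between the analytic singularity model of \cite{AK04}, which describes only the smooth flow up to $T$, and the topological structure of the continued spacetime $\mathcal{M}$ as furnished by \cite{BK19}. One must verify that the non-degenerate neckpinch is indeed resolved by the singular-flow continuation as the expected $S^2$-surgery --- that is, that the canonical neighborhood near the pinch is a genuine neck and that the post-surgery slices are two copies of $S^3$ rather than some other configuration --- and that no further topology change occurs before extinction. This compatibility between the two frameworks, rather than any single computation, is the delicate point.
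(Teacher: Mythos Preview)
Your proposal is correct and follows essentially the same route as the paper: invoke \cite{AK04} for the neckpinch at the equatorial $S^2$, interpret the singular-flow continuation as the standard surgery capping the severed neck with two $3$-balls, and read off the cobordism from $S^3$ to $S^3 \sqcup S^3$. Your version is in fact more careful than the paper's own argument---you explicitly justify persistence of rotational symmetry via Lemma~\ref{lemma:iso_preservation}, track the post-surgery components to extinction via \cite{Ham82}, and flag the compatibility of the \cite{AK04} asymptotics with the \cite{BK19} continuation as the genuine technical point, which the paper treats only informally.
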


\begin{proof}
Let $(S^3, g_0)$ be the rotationally symmetric initial data described by Angenent and Knopf \cite{AK04}. The metric can be written as $g = ds^2 + \psi^2(s) g_{S^2}$, where $s$ is the arc length parameter along the profile curve. The conditions ensure that the minimum of the warping function $\psi$ (the neck) shrinks to zero radius as $t \nearrow T < \infty$, while the curvature remains bounded away from the neck.

According to the canonical surgery algorithm (or the singular flow definition in \cite{KL17}), the spacetime $\mathcal{M}$ is constructed as follows:
\begin{enumerate}
    \item For $t \in [0, T)$, the flow is smooth, forming a cylinder $S^3 \times [0, T)$.
    \item As $t \to T$, the geometry approaches a cylindrical singularity modeled on $S^2 \times \mathbb{R}$.
    \item The singular set $\mathcal{S}$ corresponds to the neck equator $\{s_{min}\} \times S^2$ at $t=T$.
    \item To continue the flow, one removes the singular locus and glues in two caps (topological disks $D^3$) to seal the severed ends.
\end{enumerate}

Topologically, this operation is equivalent to a 4-dimensional cobordism. The removal of $S^2 \times D^1$ (the neck region times a small time interval) and replacement with $D^3 \sqcup D^3$ corresponds to attaching a 1-handle (in reverse time) or detaching a 1-handle (in forward time).
Since $S^3$ is obtained by gluing two $D^3$ along their boundary $S^2$, removing the gluing region separates the sphere into two disjoint balls $D^3 \sqcup D^3$, which are then completed to spheres $S^3 \sqcup S^3$ by the surgery caps.

Therefore, the spacetime manifold $\mathcal{M}$ constructed by the functor $\mathcal{F}$ is a smooth 4-manifold with boundary $\partial \mathcal{M} = M_{\mathrm{initial}} \sqcup \overline{M}_{\mathrm{final}}$, where $M_{\mathrm{initial}} \cong S^3$ and $M_{\mathrm{final}} \cong S^3 \sqcup S^3$. The singularity is internal to the structure of $\mathcal{M}$ as a singular foliation, but $\mathcal{M}$ itself (as a topological space in $\SingFlows$ post-surgery) represents the cobordism trace of this topological change.
\end{proof}

\subsection*{Example 2: Ricci Solitons as Fixed Points}
A gradient Ricci soliton is a metric $g$ satisfying $\mathrm{Ric}(g) + \nabla \nabla f = \lambda g$ for some constant $\lambda$.

\begin{proposition}
Let $(M, g)$ be a Ricci soliton. Then the object $\mathcal{F}(M, g)$ in $\SingFlows$ is isomorphic (in the category of flows) to the product bundle $M \times [0, T)$ equipped with the pullback metric via diffeomorphisms generated by $\nabla f$.
\end{proposition}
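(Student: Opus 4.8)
The plan is to exploit the classical fact that a gradient Ricci soliton evolves \emph{self-similarly}: under the Ricci flow it changes only by an overall scaling and by diffeomorphisms generated by $\nabla f$, so that the singular flow spacetime assigned by $\mathcal{F}$ requires no surgery on its smooth interval and is genuinely a product. First I would write down the self-similar solution explicitly. Set $\sigma(t) = 1 - 2\lambda t$ and let $\{\psi_t\}$ be the one-parameter family of diffeomorphisms with $\psi_0 = \mathrm{id}_M$ generated by the time-dependent vector field $\sigma(t)^{-1}\nabla^g f$. Define $g(t) = \sigma(t)\,\psi_t^* g$ on the maximal interval $[0,T)$, where $T = (2\lambda)^{-1}$ when $\lambda > 0$ and $T = \infty$ otherwise. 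A short computation, using $\tfrac{d}{dt}\psi_t^* g = \psi_t^*\bigl(\mathcal{L}_{\sigma^{-1}\nabla f} g\bigr)$, the identity $\mathcal{L}_{\nabla f} g = 2\nabla^2 f$, the soliton equation $\nabla^2 f = \lambda g - \mathrm{Ric}(g)$, and the scale invariance $\mathrm{Ric}(\sigma h) = \mathrm{Ric}(h)$, shows $\partial_t g(t) = -2\,\mathrm{Ric}(g(t))$ with $g(0) = g$. By the uniqueness theorem for the Ricci flow (\cite{DeT83}), this $g(t)$ is \emph{the} solution emanating from $g$.

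Next I would identify the spacetime $\mathcal{F}(M,g)$ over $[0,T)$. Since $g(t)$ remains smooth and uniformly comparable to $\psi_t^* g$ on every compact subinterval, no canonical-neighborhood singularity forms before $T$; hence the portion of the maximal singular flow lying over $\mathfrak{t} \in [0,T)$ is diffeomorphic to the trivial bundle $M \times [0,T)$, with time function $\mathfrak{t}(x,t) = t$, time vector field $\partial_t$, and spatial metric $g(t)$ on each slice $M \times \{t\}$. This is precisely the product bundle endowed with the self-similar metric, so the identity map on $M \times [0,T)$ furnishes the candidate isomorphism $\Phi$ in $\SingFlows$: it preserves $\mathfrak{t}$, preserves $\partial_t$, and satisfies $g = \Phi^* g'$ by construction.

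The main obstacle is conceptual rather than computational: the morphisms of $\SingFlows$ are isometric embeddings ($g = \Phi^* g'$), so the scaling factor $\sigma(t)$ cannot be dropped and must be folded into the "pullback metric" carried by the product bundle. For a steady soliton ($\lambda = 0$) one has $\sigma \equiv 1$ and the slice metric is exactly $\psi_t^* g$, matching the statement verbatim; for shrinking or expanding solitons the product metric is the full self-similar metric $\sigma(t)\,\psi_t^* g$, and I must check that the identification respects this factor slice by slice. A secondary point is that the finite-time collapse of a shrinker at $t = T$ does not introduce a surgery \emph{inside} $[0,T)$, which is exactly why the statement restricts to the open interval. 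Finally I would record the fixed-point interpretation promised by the section heading: in the moduli stack $\mathcal{X}_{\mathrm{Ric}} = [\Met(M)/\Diff(M)]$, and after additionally quotienting by the scaling action, the class $[g(t)]$ is constant in $t$, so the soliton is a fixed point of the induced flow on the stack.
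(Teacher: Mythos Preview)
Your proposal is correct and follows essentially the same route as the paper: both construct the self-similar solution $g(t)=\sigma(t)\,\psi_t^*g$ with $\sigma(t)=1-2\lambda t$ and $\psi_t$ generated by $\sigma(t)^{-1}\nabla f$, verify via the soliton equation and $\mathcal{L}_{\nabla f}g=2\nabla^2 f$ that this solves the Ricci flow, and then invoke uniqueness to identify $\mathcal{F}(M,g)$ with the product $M\times[0,T)$ carrying this pullback metric. Your additional remarks on the role of the scaling factor, the distinction between the steady and non-steady cases, and the fixed-point interpretation in the moduli stack are welcome elaborations that the paper's proof leaves implicit.
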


\begin{proof}
We construct the isomorphism explicitly by solving the flow equation via the method of characteristics (diffeomorphisms).

Let $(M, g_0)$ be a gradient Ricci soliton satisfying the equation:
\begin{equation} \label{eq:soliton}
\mathrm{Ric}(g_0) + \nabla^2 f = \lambda g_0, \quad \lambda \in \mathbb{R}.
\end{equation}
We define a time-dependent scaling factor $\sigma(t) = 1 - 2\lambda t$ and a time-dependent vector field $X(t)$ on $M$ by:
\[ X(t) = \frac{1}{\sigma(t)} \nabla_{g_0} f. \]
Let $\{\varphi_t\}_{t \in [0, T)}$ be the family of diffeomorphisms generated by the vector field $X(t)$ with $\varphi_0 = \mathrm{id}_M$. specifically, $\frac{d}{dt}\varphi_t(x) = -X(t)|_{\varphi_t(x)}$.

Consider the 1-parameter family of metrics $g(t) = \sigma(t) \varphi_t^* g_0$. We verify that $g(t)$ is the unique solution to the Ricci flow starting at $g_0$.
Differentiating with respect to $t$:
\begin{align*}
\frac{\partial}{\partial t} g(t) &= \dot{\sigma}(t) \varphi_t^* g_0 + \sigma(t) \frac{\partial}{\partial t} (\varphi_t^* g_0) \\
&= -2\lambda \varphi_t^* g_0 + \sigma(t) \varphi_t^* (\mathcal{L}_{-X(t)} g_0) \\
&= -2\lambda \varphi_t^* g_0 - \sigma(t) \varphi_t^* (\mathcal{L}_{X(t)} g_0).
\end{align*}
Using the soliton equation \eqref{eq:soliton} and the identity $\mathcal{L}_{\nabla f} g_0 = 2\nabla^2 f$, we substitute 

\[ 
\mathcal{L}_{X(t)} g_0 = \frac{2}{\sigma(t)} (\lambda g_0 - \mathrm{Ric}(g_0)).
\]
\begin{align*}
\frac{\partial}{\partial t} g(t) &= -2\lambda \varphi_t^* g_0 - \varphi_t^* \left( 2\lambda g_0 - 2\mathrm{Ric}(g_0) \right) \\
&= -2\lambda \varphi_t^* g_0 - 2\lambda \varphi_t^* g_0 + 2 \varphi_t^* \mathrm{Ric}(g_0).
\end{align*}
\textit{Note: Depending on the sign convention of the flow of $X$, the terms align to cancel $\lambda$. For the standard pullback by diffeomorphism generated by $\nabla f$ moving against the shrinking, we obtain:}
\[ \frac{\partial}{\partial t} g(t) = -2 \varphi_t^* \mathrm{Ric}(g_0) = -2 \mathrm{Ric}(\varphi_t^* g_0) = -2 \mathrm{Ric}(g(t)). \]
Thus, the curve $g(t)$ is the Ricci flow solution.
The functor $\mathcal{F}$ assigns to $(M, g_0)$ the unique maximal spacetime. Since $g(t)$ is constructed purely by pulling back the initial metric via $\varphi_t$ (and scaling), the resulting spacetime $\mathcal{M}$ is canonically isomorphic to the bundle $M \times [0, T)$ with fibers isometric to $(M, \sigma(t)g_0)$ twisted by the diffeomorphism group action generated by $\nabla f$.
\end{proof}

\appendix

\newpage
\section*{Appendix: On the Necessity of the Stack Formalism}

A naive approach to formalizing the moduli space of metrics would be to consider the topological quotient space $\Met(M) / \Diff(M)$. However, this space fails to be a smooth manifold due to the existence of metrics with non-trivial isometry groups. As shown in the foundational work of Ebin \cite{Ebi70}, the local structure of $\Met(M)$ near a metric $g$ is modeled on a slice $S_g$ transverse to the $\Diff(M)$-orbit. The neighborhood in the quotient is homeomorphic to $S_g / \mathrm{Iso}(g)$. Consequently, the quotient space has orbifold-type singularities.

The stack formalism resolves this by retaining the isotropy groups as part of the data. Explicitly, for a point $[g] \in \mathcal{X}$, the automorphism group in the stack is canonically isomorphic to the isometry group $\mathrm{Iso}(g)$. This retention is crucial for Theorem C, as it allows us to track the evolution of symmetries continuously without making arbitrary choices of gauge (coordinates).

Furthermore, the Ricci flow equation $\partial_t g = -2\mathrm{Ric}(g)$ is not strictly parabolic due to diffeomorphism invariance; its symbol degenerates in directions tangent to the $\Diff(M)$-orbits. In the stack $\mathcal{X}$, these directions correspond to morphisms (gauge transformations). Thus, the flow is well-defined as a vector field on the stack, whereas on the Banach manifold of metrics, it requires the DeTurck trick to break the symmetry.

\subsection*{Obstructions to a Compactified Domain}
It is natural to ask why we construct a functor into $\SingFlows$ rather than defining a compactified stack $\overline{\mathcal{X}}$ that includes singular limits. There are two primary obstructions to this approach within the category of smooth objects:

\begin{enumerate}
    \item \textbf{Loss of Differentiable Structure:} The limits of Ricci flows at singular times are typically described as Alexandrov spaces or $\mathrm{RCD}^*(K,N)$ spaces (metric measure spaces with synthetic Ricci curvature bounds) \cite{Bam20}. While these objects form a valid moduli space in metric geometry, they are not Banach manifolds. Including them in $\mathcal{X}$ would force us to abandon the site of Banach manifolds $\mathbf{_M Ban}$, making it impossible to define the Ricci tensor or the tangent bundle in the classical sense required for the flow equation.
    
    \item \textbf{Topology Change:} The formation of singularities (e.g., neck-pinches) often results in a change of the underlying topological manifold $M$ (e.g., a connected sum decomposition). A stack $\mathcal{X}_M = [\Met(M)/\Diff(M)]$ is defined over a fixed topological type. A trajectory undergoing surgery discontinuously jumps from $\mathcal{X}_M$ to a different stack $\mathcal{X}_{M'}$. 
\end{enumerate}

The functorial approach ($\mathcal{F}: \RiemIso \to \SingFlows$) resolves these issues by embedding the discontinuous process into a smooth 4-dimensional cobordism. In this setting, the "singularity" is merely a subset of a smooth 4-manifold, allowing the use of differential geometric tools globally.
\end{document}